\documentclass[12pt,reqno]{amsart}
\usepackage{amssymb,amsfonts,latexsym,amstext,amsmath,epsfig}
\usepackage[margin=1in]{geometry}
\usepackage{amsmath}


\newtheorem{theorem}{Theorem}

\newtheorem{corollary}[theorem]{Corollary}

\begin{document}

\title[Generalized Cops and Robbers games]{Characterizations and algorithms for generalized Cops and Robbers games}

\author{Anthony Bonato}
\address{Department of Mathematics, Ryerson University, Toronto, ON, Canada}
\email{abonato@ryerson.ca}
\author{Gary MacGillivray}
\address{Department of Mathematics and Statistics, University of Victoria, Victoria, BC, Canada}
\email{gmacgill@uvic.ca}

\keywords{Cops and Robbers, directed graphs, relational characterization, vertex elimination ordering} \subjclass[2000]{05C57, 05C85}
\thanks{The authors gratefully acknowledge support from NSERC}

\begin{abstract}
We propose a definition of generalized Cops and Robbers games where there are two players, the Pursuer and the Evader, who each move via prescribed rules. If the Pursuer can ensure that the game
enters into a fixed set of final positions, then the Pursuer wins; otherwise, the Evader wins. A relational characterization of the games where the Pursuer wins is provided. A precise formula is
given for the length of the game, along with an algorithm for computing if the Pursuer has a winning strategy whose complexity is a function of the parameters of the game. For games where the
position of one player does not affect the available moves of he other, a vertex elimination ordering characterization, analogous to a cop-win ordering, is given for when the Pursuer has a winning
strategy.
\end{abstract}

\maketitle

\section{Introduction}~\label{introduction}

Cops and Robbers, introduced over thirty years ago in \cite{AF,Q1,Q2}, is one example of a graph searching game that has received wide attention. The players consist of a set of \emph{cops} and a
\emph{robber}, who move alternatingly from vertex to vertex over a sequence of discrete time-steps. The cops attempt to capture the robber by occupying its vertex, while the robber is trying to avoid
this situation from ever occurring. The consideration of Cops and Robbers gives rise to a number of deep results and problems in structural, algorithmic, and random graph theory; for further reading,
see the book~\cite{bonatorjn} and the surveys~\cite{bonato1,bonato2,bonato3}.

Many variations of Cops and Robbers are possible, for example, where different kinds of moves are allowed or forbidden, the cops or robber have more or less power (for example, the cops could win if
they are sufficiently close to the robber). The game may also be played analogously on directed graphs or other relational structures. Our purpose here is not to survey all of these myriad variants,
but rather, to offer a single, common framework for the analysis of \emph{all} reasonably defined generalizations of Cops and Robbers which are perfect information games. We regard such games as
being played between a Pursuer and an Evader, where the Pursuer's objective is ``capturing'' the Evader, and the Evader's objective is to avoid capture forever (hence, the Pursuer plays the role of
the set of cops, and the Evader plays the role of the robber).

Our main contribution is to show that all suitably defined ``pursuit games'' admit essentially the same analysis, and
that the main tools have existed for quite some time.
The paper is organized as follows. In Section~\ref{DefnSec}, we present a general definition of generalized Cops and Robbers games and its associated bipartite state digraph, written
$\mathcal{D}_\mathcal{G}$. A labelling of the vertices of $\mathcal{D}_\mathcal{G}$ makes it possible to determine which player has a winning strategy, and also determines the length of the game.
Section~\ref{relchar} provides a relational characterization of the games where the Pursuer has a winning strategy; see Theorem~\ref{main1}.  This result generalizes the relational characterization
of cop-win graphs from~\cite{NW}, and the one given for $k$-cop-win graphs in~\cite{cm,HM}.  Our focus in the next section then shifts to those generalized Cops and Robbers games where the moves
available to either player are not restricted by the position of the other player.  For some of these games we are able to give a refinement of the relational characterization, as well as a
characterization in terms of a vertex ordering of an associated product graph.  We give an algorithm that determines if the Pursuer has a winning strategy.  In the case of Cops and Robbers with $k$
cops, this algorithm matches the time complexity of the known algorithms recognizing a winning strategy for the cops; see~\cite{bcp,bonatorjn,cm}. We emphasize that the characterizations and
algorithms we provide apply to contexts far more general than the original Cops and Robbers game; for example, they apply to games played on any kind of relational structures (such as directed
graphs, ordered sets, or hypergraphs) and with a variety of possible rules.

\section{Generalized Cops and Robbers games: definitions and examples}\label{DefnSec}

Intuition suggests that in a game of generalized Cops and Robbers, each player should have their own position at any time-step in the game.  The set of positions of the game should therefore, consist
of ordered pairs $(p_P, q_E)$, where $p_P$ is the position of player the Pursuer, and $q_E$ is the position of player the Evader.  It may be that not every possible pair of positions is allowed or
achievable from a given position. A move in a generalized Cops and Robbers game should involve a player possibly changing their own position, but not altering the position of their opponent.  The
\emph{state} of the game consists of the current position of the game, and an identifier of which player is next to move; each state is an ordered pair $((p_P, q_E), X)$, where $X \in \{P, E\}$
indicates which player is next to move. Each player, on their turn, alters the state of the game by possibly changing their position, and changing the player who  is next to move. We formalize these
notions in the following definition, which has some similarities to the definition of a combinatorial game.

A \emph{generalized Cops and Robbers game} is any discrete-time process $\mathcal{G}$ that satisfies the following rules:
\begin{enumerate}
\item There are two players named the \emph{Pursuer} and the \emph{Evader}.

\item There is perfect information.

\item There is a set $\mathcal{P}_P$  of \emph{allowed positions for the Pursuer} and a set $\mathcal{P}_E$ of \emph{allowed positions for the Evader}.  The set of \emph{positions of the game} is
    the subset $\mathcal{P} \subseteq \mathcal{P}_P \times \mathcal{P}_E$ of positions that can be achieved when moves are made according to the rules of the game.  Similarly, the set  of
    \emph{states} of the game is the subset of $\mathcal{S} \subseteq \mathcal{P} \times \{P, E\}$ such that $((p_P, q_E), X) \in \mathcal{S}$ if the position $(p_P, q_E)$, with $X$ next to move,
    can be achieved when moves are made according to the rules of the game.

\item For each state of the game and each player, there is a non-empty set of allowed moves. Each allowed move leaves the position of the other player unchanged.  We use $\mathcal{A}_P(p_P, q_E)$
    to denote the set of \emph{allowed moves for the Pursuer} when the state of the game is  $((p_P, q_E), P)$, and $\mathcal{A}_E(p_P, q_E)$ to denote the set of \emph{allowed moves for the
    Evader} when the state of the game is  $((p_P, q_E), E)$.

\item There is a set $\mathcal{I} \subseteq \mathcal{P}_P \times \mathcal{P}_E$ of \emph{allowed start positions}.  We define the set
    $\mathcal{I}_P = \{p_P: (p_P, q_E) \in \mathcal{I}\ \mathrm{for\ some}\ q_E \in \mathcal{P}_E\}$ and, for $p_P \in \mathcal{P}_P$, we define the set $\mathcal{I}_E(p_P) = \{q_E \in
    \mathcal{P}_E: (p_P, q_E) \in \mathcal{I}\}.$ The game $\mathcal{G}$ begins by the Pursuer choosing a position $p_P \in \mathcal{I}_P$, and then the Evader choosing a position $q_E \in
    \mathcal{I}_E(p_P)$.  

\item After each player has chosen its initial position, the sides move alternately with the Pursuer moving first.  Each player, on its turn, must choose an allowed move given the current state of
    the game.

\item The rules of the game specify when the Pursuer has caught the Evader. That is, there is a subset $\mathcal{F}$ of \emph{final positions}.  The Pursuer wins $\mathcal{G}$ if, at any time-step,
    the current position of the game belongs to $\mathcal{F}$.  The Evader wins if their current position never belongs to $\mathcal{F}$.
\end{enumerate}

We only consider generalized Cops and Robbers games where the set $\mathcal{P}$ of positions is finite.  All the games we consider are played over a sequence of discrete time-steps or
\emph{rounds} which are indexed by natural numbers (including $0$).

Analogously to combinatorial games (for example, see \cite{Smith}),
Generalized Cops and Robbers games may be analyzed using bipartite digraphs.
These state digraphs have appeared many
times in the Cops and Robbers and graph theory literature; see for example, \cite{acp,cm,ffn,HM}.
In the sequel we describe the method.
Later, we will link it to the relational characterization of games which are won by the Pursuer.
For a given generalized Cops and
Robbers game $\mathcal{G}$, construct the \emph{state digraph}, $\mathcal{D}_\mathcal{G}$ as follows. The vertex set $V$ is the disjoint union $\mathcal{S}_P \cup \mathcal{S}_E$, where
$\mathcal{S}_P$ is the set of states of the game when it is the Pursuer's turn to move, and $\mathcal{S}_E$ is the set of states of the game when it is the Evader's turn to move.  The directed edges
of $\mathcal{D}_\mathcal{G}$ have one end in $\mathcal{S}_P$ and the other in $\mathcal{S}_E$; there is a directed edge from vertex $s$ to vertex $t$ whenever the rules of the game allow a move so
that the state of the game changes from $s$ to $t$. After the game has started, we can imagine the state of the game being represented by a unique token placed on a vertex of
$\mathcal{D}_\mathcal{G}$. Each player, on their turn, makes a move \emph{from} the current vertex $s$ \emph{to} another vertex $t$ by sliding the token along a directed edge from $s$ to $t$. The
Pursuer wins if and only if the token is eventually on a vertex $((p_P, q_E), X)$ with $(p_P, q_E) \in \mathcal{F}$.  Otherwise, the Evader wins.

As with combinatorial games (see \cite{Smith,Steinhaus}), a labelling of the state digraph can be used
to determine if the Pursuer has a winning strategy (also see \cite{BI,HM}), and the length of the game.
We describe the labelling procedure in the case of generalized Cops and Robbers games.
Initially, the vertices $((p_P,
q_E), X)$ with $(p_P, q_E) \in \mathcal{F}$ have $\mathrm{label}(S) = 0$, and all other vertices $X$ have  $\mathrm{label}(X) = \infty$.  We use the notation $N^+(u)$ for the set of out-neighbours of
a vertex $u$ in a directed graph. The following process is then repeated until no labels change:

\begin{enumerate}
\item If $X \in \mathcal{S}_P$, then set $\mathrm{label}(X) = 1 + \min_{Y \in N^+(X)}\mathrm{label}(Y)$.
\item If $X \in \mathcal{S}_E$, then set $\mathrm{label}(X) = 1 + \max_{Y \in N^+(X)}\mathrm{label}(Y)$.
\end{enumerate}
The intuition behind the labels is that the Pursuer's optimal strategy is to move so that the game is over as quickly as possible, and the Evader's optimal strategy is to move so the game lasts as
long as possible. It is straightforward to prove by induction that the final labels are the number of moves that the game will last from a given position, assuming both sides play optimally (see
Theorem~\ref{RelationIndex}).  The Pursuer has a winning strategy has a winning strategy if and only if he can choose a starting position $p_P \in \mathcal{I}_P$ so that $\mathrm{label}(P) < \infty$
for all $q_E \in \mathcal{I}_E(p_P)$.

We next provide some examples of generalized Cops and Robbers games. The list is by no means exhaustive.

\begin{enumerate}
\item \emph{Cops and Robbers} \cite{AF,NW,Q1,Q2}. Players moving to neighbouring vertices, and the final states represent any move of a cop onto the vertex containing the robber.

\item \emph{Distance-$k$ Cops and Robbers} \cite{bcp,ccnv}. In this variation, the cops and robber move as in the original game, but the final states include the case when the robber is within
    distance $k$ of some cop, where $k$ is a fixed positive integer.

\item \emph{Tandem-win Cops and Robbers} \cite{CN1,C3}. In this variation, there are pairs of cops who must remain within distance one of each other. This alters only the set of allowed positions of the cops.

\item \emph{Cops and Robbers with traps} \cite{cn}. Some devices are deployed by the cops so that if the robber enters a node with a ``trap'' he loses the game. Here, we restrict the allowed moves
    of the robber. In particular, if $T$ is the set of vertices with traps, then $\mathcal{F}$ contains $V_P \times T$ (along with the usual final states in Cops and Robbers).

\item \emph{Eternal domination} \cite{ghh}. Fix a graph $G$. The positions for the Pursuer are the vertices of $G$.  The positions for the Evader are the $k$-subsets of $V(G)$, where $k\ge 1$ is an
    integer. The vertices in these $k$-subsets are regarded as each holding a guard. The positions for the Pursuer are regarded as vertices where the Evader must locate a guard on their next move.
    The allowed starting positions are the pairs $(x, X)$ with $x \in X.$  On their turn, the Pursuer can move to any vertex from any other vertex. On their turn, the Evader can move from position
    $X_1$ to position $Y_1$ when every guard in $X_1$ can slide along an edge (maybe a loop) so that the resulting configuration is $Y_1$ (or whatever the variant of the game dictates; for example,
    only one guard may move).  The final states are the pairs $(x, Y)$ with $x \not \in Y$.  The graph $G$ has an \emph{eternal dominating set of cardinality} $k$ if and only if the Evader has a
    winning strategy.

\item \emph{Revolutionaries and spies} \cite{butter,mp}. For fixed positive integers $r$ and $s$, there is a set of $r$ \emph{revolutionaries} (who are the Evaders) and a team of $s$ \emph{spies}
    (who are the Pursuers). For a fixed positive integer $m$, a \emph{meeting} is a set of at least $m$ revolutionaries occupying a vertex; a meeting is \emph{unguarded} if there is no spy at that
    vertex. The revolutionaries begin the game by occupying some set of vertices, and then the spies do the same. Players may move in each subsequent round to adjacent vertices or remain at their
    current vertex. The revolutionaries win if at the end of some round there is an unguarded meeting. The spies win if they can prevent an unguarded meeting from ever occurring. The final
    positions are those where the revolutionaries form an unguarded meeting.

\item \emph{Seepage} \cite{bmp,CFFMN}. This game is played on a directed acyclic graph, with a single source (vertex of in-degree zero) and a fixed number of sinks (vertices of out-degree zero).
    the Evader is called \emph{sludge} and there is some number of \emph{greens} (the Pursuer). The sludge begins at the source and, on each turn, moves along directed edges to any out-neighbours
    not protected by the greens, or simply does not move. On their turn, the greens protect some vertices not occupied by sludge by moving to them, if possible, (one vertex per green; not
    necessarily an adjacent vertex); once a vertex is protected, it remains that way to the end of the game.  The final positions are those where the greens have prevented sludge from entering a
    sink.  A related problem is $S$-Fire \cite{FM}.

\end{enumerate}

\section{A relational characterization}~\label{relchar}

We now show that a relational characterization similar to, but slightly more general than, the one given in \cite{NW, cm} for Cops and Robbers
holds for all generalized Cops and Robbers games.
In games like Cops and Robbers, where the position of one player does not affect the collection of moves available to the
opposing player, the
relational characterization has on additional properties; see Section~\ref{pind}.

Let $\mathcal{G}$ be a generalized Cops and Robbers game.  Similarly to \cite{cm,NW}, define a non-decreasing sequence of relations from $\mathcal{P}_E$ to $\mathcal{P}_P$, indexed by natural
numbers, as follows:
\begin{enumerate}
\item $q_E \preceq_0 p_P$ if and only if $(p_P, q_E) \in \mathcal{F}$.
\item Suppose $\preceq_0, \preceq_1, \ldots \preceq_{i-1}$ have all been defined for some $i \geq 1$.  Define $q_E \preceq_i p_P$ if $(p_P, q_E) \in \mathcal{F}$, or if $((p_P, q_E), E) \in
    \mathcal{S}$,  and for every $x_E \in \mathcal{A}_E(p_P, q_E)$ either $(p_P, x_E) \in \mathcal{F}$ or there exists $w_P \in \mathcal{A}_P(p_P, x_E)$ such that  $x_E \preceq_j  w_P$ for some $j
    < i$.
\end{enumerate}
By definition $\preceq_{i}$ contains $\preceq_{i-1}$ for each $i \geq 1$.  Since $\mathcal{P}_E \times \mathcal{P}_P$ is finite, there exists $t$ such that $\preceq_t\ =\ \preceq_k$ for all $k \geq
t$.  Define $\preceq\ =\ \preceq_t$.

We next use this sequence of relations to derive a method of determining the winner of a given generalized Cops and Robbers game.  For $i > 0$, the intuition behind the definition of $\preceq_i$ can
be phrased as: \emph{if the game is not over, for every move that the Evader can make from this position, either the game ends or the Pursuer has a response that leads to a win after he has moved $j$
more times}. Since the Pursuer is next to move once a start position $(p_P, q_E) \in \mathcal{I}$ has been determined, it is not necessary that $q_E \preceq p_P$ in order for the Pursuer to have a
winning strategy. Instead he needs to be able to choose their initial position $p_P$ in such a way that no matter which initial position is chosen by the Evader, he has a move to a position $w_P \in
\mathcal{A}_P(p_P, q_E)$ such that $q_E \preceq w_P$.

\smallskip
\begin{theorem}~\label{main1}
The Pursuer has a winning strategy in the generalized Cops and Robbers game $\mathcal{G}$ if and only if there exists $p_P \in \mathcal{I}_P$ such that, for all $q_E \in \mathcal{I}_E(p_P)$, either
$(p_P, q_E) \in \mathcal{F}$ or there exists $w_P \in \mathcal{A}_P(p_P, q_E)$ such that $q_E \preceq w_P$.
\end{theorem}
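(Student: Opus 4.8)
The plan is to reduce the theorem to a single lemma describing exactly when the Pursuer can win from a state in which it is the Evader's turn to move, and then to account separately for the two opening choices (the Pursuer's choice of $p_P$ and the Evader's choice of $q_E$). Concretely, I would prove the following equivalence: for any state $((p_P,q_E),E)$ that can arise in play (so $((p_P,q_E),E)\in\mathcal{S}$, with the final-position case handled trivially by $\preceq_0$), the Pursuer has a winning strategy from $((p_P,q_E),E)$ if and only if $q_E\preceq p_P$. Granting this, the theorem follows quickly. For the ``if'' direction of the theorem, the Pursuer opens with the guaranteed $p_P\in\mathcal{I}_P$; against any reply $q_E\in\mathcal{I}_E(p_P)$ that does not already yield $(p_P,q_E)\in\mathcal{F}$, the Pursuer plays the move $w_P\in\mathcal{A}_P(p_P,q_E)$ with $q_E\preceq w_P$, reaching $((w_P,q_E),E)$, from which the lemma supplies a win. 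For the ``only if'' direction, a winning strategy prescribes an opening $p_P$; for each reply $q_E$ with $(p_P,q_E)\notin\mathcal{F}$ the strategy's first move $w_P$ must leave the Pursuer winning from $((w_P,q_E),E)$, so the lemma gives $q_E\preceq w_P$, which is exactly the asserted condition.

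I would establish the ``$q_E\preceq p_P\Rightarrow$ win'' half of the lemma by strong induction on the least index $i$ with $q_E\preceq_i p_P$ (finite, since $\preceq_0\subseteq\preceq_1\subseteq\cdots$ stabilizes to $\preceq$). The base case $i=0$ is immediate, as $q_E\preceq_0 p_P$ means $(p_P,q_E)\in\mathcal{F}$ and the Pursuer has already won. For $i\ge 1$, the defining clause of $\preceq_i$ says that for every Evader move $x_E\in\mathcal{A}_E(p_P,q_E)$ either $(p_P,x_E)\in\mathcal{F}$ or there is a response $w_P\in\mathcal{A}_P(p_P,x_E)$ with $x_E\preceq_j w_P$ for some $j<i$; the Pursuer plays such a $w_P$ and invokes the induction hypothesis at the strictly smaller index $j$ to win from $((w_P,x_E),E)$. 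Because the controlling index strictly decreases each round, this forces the position into $\mathcal{F}$ within at most $i$ rounds. Crucially, the universal quantifier over $x_E$ in the definition of $\preceq_i$ is precisely what a winning strategy at an Evader-to-move state requires.

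For the converse half of the lemma I would argue the contrapositive: if $q_E\not\preceq p_P$ then the Evader escapes forever from $((p_P,q_E),E)$. The device is to exploit stabilization, $\preceq=\preceq_t=\preceq_{t+1}$, to unfold the negation: assuming $(p_P,q_E)\notin\mathcal{F}$ and $((p_P,q_E),E)\in\mathcal{S}$, $q_E\not\preceq p_P$ yields an Evader move $x_E\in\mathcal{A}_E(p_P,q_E)$ with $(p_P,x_E)\notin\mathcal{F}$ such that $x_E\not\preceq w_P$ for \emph{every} $w_P\in\mathcal{A}_P(p_P,x_E)$; and since $\preceq_0\subseteq\preceq$, this also gives $(w_P,x_E)\notin\mathcal{F}$. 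I would package this as an invariant maintained over the entire play: at every reached Evader-to-move state $((p,q),E)$ one has $q\not\preceq p$ and $(p,q)\notin\mathcal{F}$. The Evader moves to the escape position $x$ just produced, and then whichever $w$ the Pursuer chooses, the resulting $((w,x),E)$ again satisfies the invariant, while the intervening positions $(p,x)$ and $(w,x)$ are never final. Rule (4) guarantees every required move set is non-empty, so this Evader strategy is well-defined, the position never enters $\mathcal{F}$, and the Evader wins.

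I expect the main obstacle to be this necessity (escape) half, specifically making the unfolding of ``$q_E\not\preceq p_P$'' rigorous: one must pass from the stabilized relation $\preceq$ back to the inductive clause of $\preceq_{t+1}$ to extract a single Evader move whose escaping property survives every Pursuer response, and then verify that the invariant is genuinely preserved round after round rather than for a single step. Care is also needed to check that the position avoids $\mathcal{F}$ at \emph{every} time-step, not merely at Evader-to-move states, and to keep the roles of $\mathcal{S}_P$ and $\mathcal{S}_E$ straight, since $\preceq$ records information only about Evader-to-move states while the Pursuer's intervening moves are absorbed into one step of the recursion. By contrast, the sufficiency half and the bookkeeping for the two opening moves should be routine once the lemma is in hand.
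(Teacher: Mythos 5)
Your proposal is correct and takes essentially the same route as the paper's proof: the sufficiency half of your lemma is the paper's induction on the least index $i$ with $q_E \preceq_i p_P$ (Pursuer wins in $i$ more moves from an Evader-to-move state), and your escape-invariant argument for necessity is exactly the paper's contrapositive proof of the forward implication, with the stabilization $\preceq\ =\ \preceq_{t+1}$ and the avoidance of $\mathcal{F}$ at intermediate time-steps made slightly more explicit. The only difference is organizational---you package both directions into a single lemma about Evader-to-move states before doing the opening-move bookkeeping---which does not change the substance.
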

\begin{proof}
We prove the contrapositive of the forward implication. Suppose that the condition does not hold.  Then for all $p_P \in \mathcal{I}_P$, there exists $q_E \in \mathcal{I}_E(p_P)$ such that there is
no $w_P \in \mathcal{A}_P(p_P, q_E)$ with $q_E \preceq w_P$. If the Evader chooses such a position then, no matter the position $v_P$ to which the Pursuer moves, the state of the game is $((v_P,
q_E), E)$ with $q_E \not\preceq v_P$. By definition of the sequence of relations, the Evader can move to $x_E \in \mathcal{A}_E(v_P, q_E)$ for which there is no $y_P \in \mathcal{A}_P(v_P,x_E)$ such
that $x_E \preceq y_P$.  By induction, it never occurs that the Pursuer is on $p_P$ and the Evader is on $q_E$ such that $q_E \preceq _0 p_P$; hence, the Evader has a winning strategy.

For the reverse implication, we first prove by induction that if it is the Evader's move and the state of the game is $(p_P, q_E)$ with $q_E \preceq_i p_P$, then the Pursuer wins after making $i$
more moves.  This is clear when $i = 0$.  Assume the statement holds for all natural numbers $j$ with $0 \leq j \leq i-1$.  By definition of the sequence of relations, since $q_P \preceq_i p_E$ and
$q_P \not\preceq_{i-1} p_E$, the Evader has a move to a position $w_E \in \mathcal{A}_E(p_E, q_P)$ such that, for all positions $x_P \in \mathcal{A}(p_P, w_E)$ to which the Pursuer can subsequently
move, $i-1 = \min \{j: w_E \preceq_j x_P\}$. Hence, by definition of $\preceq_i$, the statement holds for $i.$

Suppose there exists $p_P \in \mathcal{I}_P$ such that, for all $q_E \in \mathcal{I}_E(p_P)$, either $(p_P, q_E) \in \mathcal{F}$ or there exists $w_P \in \mathcal{A}_P(p_P, q_E)$ such that $q_E
\preceq w_P$.  After the start state has been determined it is the Pursuer's move.  The condition guarantees that if the game is not over, then it is always possible for the Pursuer to move to a
state from which he has a winning strategy.   Hence, the reverse implication holds.
\end{proof}

The definition of $\preceq$ is similar to the labelling procedure described in Section~\ref{DefnSec}, except that the indices in the sequence take into account a move by each player. Assuming that
the Pursuer has a winning strategy, the sequence of relations can be used to determine the length of the game in terms of the number of moves that he must make in order to win assuming both sides
play optimally.   For an allowed start position $(p_P, q_E) \in \mathcal{I}$, define
$$\ell(p_P, q_E) =
\begin{cases}
\min\{j: q_E \preceq_j p_P\}& \text{ if } q_E \preceq_j p_P,\\
1 + \max_{w_P \in \mathcal{A}_P(p_P, q_E))} \min\{j: q_E \preceq_j w_P\}&  \text{ otherwise.}\\
\end{cases}
$$

We have the following corollary.
\smallskip
\begin{corollary}~\label{corlength}
Suppose the Pursuer has a winning strategy in the generalized Cops and Robbers game $\mathcal{G}$. Assuming optimal play the length of the game is
$$\min_{p_P \in \mathcal{I}_P}\, \max_{q_E \in \mathcal{I}_E(p_P)}  \ell(p_P, q_E).$$
\end{corollary}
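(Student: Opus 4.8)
The plan is to prove the single statement that, for every allowed start position $(p_P, q_E) \in \mathcal{I}$, the number $\ell(p_P, q_E)$ is exactly the number of moves the Pursuer makes before winning under optimal play from the state $((p_P, q_E), P)$; recall that once both initial positions have been fixed it is the Pursuer who moves first. Granting this, the corollary is immediate from the start rule: the Pursuer first chooses $p_P \in \mathcal{I}_P$ and then the Evader chooses $q_E \in \mathcal{I}_E(p_P)$, and since the Pursuer plays to finish as quickly as possible while the Evader plays to last as long as possible, optimal play realizes $\min_{p_P \in \mathcal{I}_P} \max_{q_E \in \mathcal{I}_E(p_P)} \ell(p_P, q_E)$. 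The whole content therefore lies in identifying $\ell$ with the optimal Pursuer-move count, and this reduces to analysing one round of play together with the meaning of the relations $\preceq_j$.

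First I would isolate the key lemma about Evader-to-move states: for every $((p_P, q_E), E) \in \mathcal{S}$, the optimal number of remaining Pursuer moves equals $\min\{j : q_E \preceq_j p_P\}$, with the convention that this is $\infty$ when $q_E \not\preceq p_P$. The upper bound, that the Pursuer can win within $\min\{j : q_E \preceq_j p_P\}$ moves, is essentially the induction carried out in the reverse implication of Theorem~\ref{main1}, since $q_E \preceq_i p_P$ guarantees a win in $i$ further moves. For the matching lower bound I would prove, by induction on $i$, that if $q_E \not\preceq_i p_P$ and it is the Evader's turn, then the Evader can force the game to last at least $i + 1$ more Pursuer moves. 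The base case $i = 0$ holds because $q_E \not\preceq_0 p_P$ says $(p_P, q_E) \notin \mathcal{F}$, so at least one more Pursuer move is needed. For the step, the failure of $q_E \preceq_i p_P$ supplies an Evader move $x_E \in \mathcal{A}_E(p_P, q_E)$ with $(p_P, x_E) \notin \mathcal{F}$ for which every reply $w_P \in \mathcal{A}_P(p_P, x_E)$ satisfies $x_E \not\preceq_{i-1} w_P$; the Evader plays $x_E$, the Pursuer's reply costs one move and lands on an Evader-to-move state covered by the induction hypothesis, giving a total of at least $1 + i$ moves. The two bounds together prove the lemma.

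With the lemma in hand I would read off $\ell$ at the Pursuer-to-move start state $((p_P, q_E), P)$. If $(p_P, q_E) \in \mathcal{F}$ the game is already over and the length is $0$, which is the first case of the definition, where $\min\{j : q_E \preceq_j p_P\} = 0$. Otherwise the Pursuer makes a single move to some $w_P \in \mathcal{A}_P(p_P, q_E)$, reaching the Evader-to-move state $((w_P, q_E), E)$ whose optimal remaining count is, by the lemma, $\min\{j : q_E \preceq_j w_P\}$; the Pursuer selects the reply that makes this smallest, and such a winning reply, one with $q_E \preceq w_P$, exists by Theorem~\ref{main1} whenever the Pursuer wins, so the length is one more than this minimum, as recorded by the second case of the definition of $\ell$. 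Feeding these values into the start-position optimization described in the first paragraph yields the claimed $\min$--$\max$ formula, and the convention that the relevant minimum is $\infty$ on non-winning replies keeps the expression finite precisely when the Pursuer has a winning strategy, in agreement with Theorem~\ref{main1}.

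The hard part will be the lower-bound half of the key lemma: converting the purely relational fact $q_E \not\preceq_i p_P$ into an explicit delaying strategy for the Evader and bookkeeping the move counts so that the upper and lower bounds meet exactly. The delicate points are that one increment of the index of $\preceq$ corresponds to a full round, an Evader move answered by a Pursuer reply, whereas the length counts only the Pursuer's moves, and that the game may terminate on either player's move; both must be tracked carefully so that the round-based recursion for $\ell$ lines up with the move-based notion of game length.
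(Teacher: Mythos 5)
Your argument is correct in substance and follows essentially the same route as the paper: both proofs rest on identifying, at an Evader-to-move state, the minimal index $i$ with $q_E \preceq_i p_P$ as the number of Pursuer moves remaining under optimal play, and then treating a start position as one Pursuer move followed by such a state, with the min--max over the initial choices layered on top. Where the paper simply recalls from the proof of Theorem~\ref{main1} that a minimal pair $x \preceq_i y$, $x \not\preceq_{i-1} y$ gives the Evader a move $z$ with $i-1 = \min_{w \in \mathcal{A}_P(z,x)}\{j : z \preceq_j w\}$, you prove the matching lower bound by a separate induction ($q_E \not\preceq_i p_P$ lets the Evader force at least $i+1$ further Pursuer moves), which also handles the case $q_E \not\preceq p_P$ cleanly; this is the same idea, executed more explicitly.

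One discrepancy deserves flagging: what you actually prove is the corollary for a corrected reading of $\ell$, not for $\ell$ as displayed. You split on $(p_P,q_E) \in \mathcal{F}$ versus not, and use $1 + \min_{w_P}\min\{j : q_E \preceq_j w_P\}$ in the second case; the paper's definition splits on whether $q_E \preceq p_P$ and writes $\max_{w_P}$. The $\max$ is plainly a typo (the paper's own proof minimizes over $w_P$), so that part of your reading is the intended one. But the case split is a substantive difference: if $q_E \preceq_i p_P$ with minimal $i \geq 1$ and $(p_P,q_E) \notin \mathcal{F}$, the displayed $\ell$ returns $i$, whereas the true length --- and your formula --- is $1 + \min_{w_P}\min\{j : q_E \preceq_j w_P\}$, which can be strictly smaller because $\preceq_i$ encodes Evader-to-move play while the Pursuer moves first from a start position. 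For instance, in Cops and Robbers on the reflexive path $1$--$2$--$3$ with the cop on $1$ and the robber on $2$, the minimal index is $2$ but the cop wins in one move. So your analysis is the right one; you should just state that you are correcting the definition rather than asserting, as you do, that your two cases coincide with the displayed ones. Finally, a caveat shared with the paper: your base case (``at least one more Pursuer move is needed'') fails in the degenerate situation where every Evader move from $(p_P,q_E)$ lands in $\mathcal{F}$, since the game then ends with zero further Pursuer moves although the minimal index is $1$; this cannot occur when the Evader may pass, but in the fully general setting both your proof and the paper's need that case noted.
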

\begin{proof}
 As noted in the proof of Theorem \ref{main1}, by definition of the increasing sequence of relations, if $x \preceq_i y$ and $x \not\preceq_{i-1} y$, then the Evader has a move to a position $z \in
\mathcal{A}_E(y, x)$ such that $$i-1 = \min_{w \in \mathcal{A}_P(z, x)} \{j: z \preceq_j w\}.$$  Suppose the Pursuer chooses $p_P \in \mathcal{I}_P$, and the Evader subsequently chooses an allowed
starting position $q_E \in \mathcal{I}_E(p_P)$.  If $q_E \preceq_i p_P$, then assuming optimal play, the process ends in $i$ more moves by the Pursuer.  If, on the other hand, $q_E \not\preceq_i
p_P$, then assuming optimal play, the Pursuer will move to a position $w_P$ for which the value of $j$ such that $q_E \preceq_j w_P$ is minimized, and the process ends in $j+1$ more moves by the
Pursuer.
\end{proof}

We now establish a direct connection between the labelling of the state digraph and the sequence of relations, thus, linking the two methods of analysis.

\smallskip
\begin{theorem}
Let $\mathcal{G}$ be a generalized Cops and Robbers game. The label of the vertex $((p_P, q_E), E) \in V(\mathcal{D}_\mathcal{G})$ equals $k$ if and only if $q_E \preceq_{\lceil \frac{k}{2} \rceil}
p_P$ and $q_E \not\preceq_{\lceil \frac{k}{2} \rceil - 1} p_P$  when $k \geq 1$. \label{RelationIndex}
\end{theorem}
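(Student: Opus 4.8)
The plan is to recast the claimed correspondence as the single clean equivalence
$$ q_E \preceq_i p_P \iff \mathrm{label}((p_P,q_E),E) \le 2i, $$
valid for every Evader state and every integer $i \ge 0$, and to prove this by induction on $i$. Since $\preceq_i$ is non-decreasing in $i$, this equivalence gives $\min\{i : q_E \preceq_i p_P\} = \lceil \mathrm{label}/2\rceil$, which is exactly the assertion that $q_E \preceq_{\lceil k/2\rceil} p_P$ while $q_E \not\preceq_{\lceil k/2\rceil - 1} p_P$ when the label equals $k$. The factor of two is the conceptual heart of the matter: one index step of $\preceq$ accounts for a full round (a move by each player), whereas one unit of the label counts a single move, so a state at relational index $i$ corresponds to a game of length $2i$ or $2i-1$.

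The two ingredients driving the induction are the defining recursions of the labelling: for a non-final Evader state, $\mathrm{label}((p_P,q_E),E) = 1 + \max_{x_E} \mathrm{label}((p_P,x_E),P)$, and for a non-final Pursuer state, $\mathrm{label}((p_P,x_E),P) = 1 + \min_{w_P}\mathrm{label}((w_P,x_E),E)$. The base case $i=0$ is immediate, since $q_E \preceq_0 p_P$ means $(p_P,q_E)\in\mathcal{F}$, which is precisely $\mathrm{label}=0$.

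For the inductive step I assume the equivalence for all $j<i$. To prove ``$\mathrm{label}\le 2i \Rightarrow q_E\preceq_i p_P$'' I verify the defining condition of $\preceq_i$ move by move: when the state is non-final, the Evader recursion forces every Pursuer state $((p_P,x_E),P)$ reachable by an Evader move to have label at most $2i-1$; if such a state is final, the first disjunct in the definition of $\preceq_i$ holds, and otherwise the Pursuer recursion supplies a $w_P$ with $\mathrm{label}((w_P,x_E),E)\le 2i-2=2(i-1)$, whence $x_E \preceq_{i-1} w_P$ by the induction hypothesis, giving the required index $j=i-1<i$. The converse direction runs the same computation backwards: if $q_E \preceq_i p_P$, then for each Evader move $x_E$ either the position is already final (so the intermediate Pursuer-state label is $0$) or some $w_P$ satisfies $x_E \preceq_{i-1} w_P$, which by the induction hypothesis bounds $\mathrm{label}((w_P,x_E),E)\le 2(i-1)$ and hence $\mathrm{label}((p_P,x_E),P)\le 2i-1$; taking the maximum over $x_E$ and adding one yields $\mathrm{label}((p_P,q_E),E)\le 2i$.

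I expect the main obstacle to be bookkeeping rather than depth: one must track the two-case structure in the definition of $\preceq_i$ (the Evader being driven into $\mathcal{F}$ on its own move, versus the Pursuer needing a reply) and match it precisely against the ``label $0$ versus label $\ge 1$'' dichotomy for the intermediate Pursuer states, all while keeping the parity translation straight so that the min and the max in the two recursions align with the correct halving. A secondary point worth recording explicitly is that ``$x_E \preceq_j w_P$ for some $j<i$'' may be replaced by ``$x_E \preceq_{i-1} w_P$'' using the monotonicity $\preceq_{j}\subseteq\preceq_{i-1}$; this is what lets the single induction hypothesis at index $i-1$ close both implications, with no circular appeal to states of larger label.
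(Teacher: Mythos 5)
Your proposal is correct, and its computational core---unfolding the fixed-point equations of the labelling one round at a time and matching the min/max recursions against the for-all/there-exists structure in the definition of $\preceq_i$---is the same as the paper's. The decomposition, however, is genuinely different and tighter. The paper runs two separate inductions: item (i), an induction on the relation index showing only that $q_E \preceq_n p_P$ forces a finite label (with no quantitative bound), and item (ii), an induction on the label value $k$ recovering the index $\lceil k/2 \rceil$. You instead prove the single two-sided equivalence $q_E \preceq_i p_P \iff \mathrm{label}((p_P,q_E),E) \le 2i$ by one induction on $i$; this subsumes both of the paper's items and yields the theorem through $\min\{i : q_E \preceq_i p_P\} = \lceil \mathrm{label}((p_P,q_E),E)/2 \rceil$. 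What your route buys is a quantitative bound in both directions and cleaner bookkeeping: the paper's second induction, which compares label values and relation indices directly rather than through the uniform translation ``label $\le 2i$,'' suffers index slips in its write-up (its inductive step concludes $q_P \preceq_t p_P$ where the subscript should be $\lceil t/2 \rceil$). One caveat applies equally to your argument and the paper's: the theorem's literal biconditional cannot hold as stated, since $k = 2m-1$ and $k = 2m$ produce identical right-hand sides while at most one of them can equal the label of a given vertex (indeed, in classical Cops and Robbers every Evader-to-move label is even, yet the displayed conditions with $k=1$ are then satisfied by label-$2$ vertices). What is actually provable---and what both you and the paper establish---is the forward direction together with the fact that $q_E \preceq p_P$ forces a finite label; your equivalence is the sharpest correct formulation of that content.
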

\begin{proof}
 The result will follow if we can establish the following items: (i) if $q_E \preceq p_E$, then $((p_P, q_E), E)$ is labelled with a natural number; and (ii) if the label of the vertex $((p_P, q_E),
E) \in V(\mathcal{D}_\mathcal{G})$ equals $k$, then  $q_E \preceq_{\lceil \frac{k}{2} \rceil} p_P$ and $q_E \not\preceq_{\lceil \frac{k}{2} \rceil - 1} p_P$  when $k \geq 1$.%

To prove (i) we argue by induction on $n$ that if $q_E \preceq_n p_P$, then $((p_P, q_E), E)$ is labelled with a natural number. The statement is true when $n=1$ by the labelling procedure.  Suppose,
for some $m \geq 1$, that it is true when $n=m-1$, and that  $q_E \preceq_m p_P$.

Then for every $w_E \in \mathcal{A}_E(p_P, q_E)$ either $(p_P, w_E) \in \mathcal{F}$, or there exists $x_P \in \mathcal{A}_P(p_P, w_E)$ such that $w_E \preceq_j x_P$ for some $j < m$.  Hence, by the
labelling procedure, for every $w_E \in \mathcal{A}_E(p_P, q_E)$ either $\mathrm{label}((p_P, w_E), P) = 0$, or there exists $x_P \in \mathcal{A}_P(p_P, w_E)$ such that $(x_P, w_E)$ is labelled with
a natural number. In the latter case, by the labelling procedure again, the vertex $((p_P, w_E), P)$ is labelled with a natural number,  since every out-neighbour of the vertex $(p_, q_E), E)$ is
labelled with a natural number.  The statement now follows by induction.

The proof of (ii)  is by induction on $k$.   It follows from the definitions that the statement is true when $k = 0$.  If $k = 1$, then we have $(p_P, q_E) \not\in \mathcal{F}$, but
for every $w_E \in \mathcal{A}_E(p_P, q_E)$, the position $(p_P, w_E) \in \mathcal{F}$.
Hence, by definition of the sequence of relations,
$q_E \preceq_1 p_P$.  Since $(p_P, q_E) \not\in \mathcal{F}$, we have $q_E \not\preceq_{0} p_P$.  Therefore, the statement is also true when $k = 1$.

Suppose, for some $t \geq 2$, the statement is true when $ 0 \leq k \leq t-1$, and we have that $\mathrm{label}((p_P, q_E), E) = t$. Then by the definition of the labelling,
$$t-1 \geq \max_{w_E \in \mathcal{A}_E(p_P, q_E)}  \mathrm{label}((p_P, w_E) , P)$$
and equality is achieved at least once.
By definition of the labelling again, for any the Evader position $w_E \in \mathcal{A}_E(p_P, q_E)$,
$$t-2 \geq \min_{x_P \in \mathcal{A}_P(p_P, w_E)}  \mathrm{label}((w_P, w_E) , E)$$
and equality is achieved at least once for each $w_E$ with $\mathrm{label}((p_P, w_E) , P) = t-1$. Thus, by the induction hypothesis, for every $w_E \in \mathcal{A}_E(p_P, q_E)$, either $(p_P, w_E)
\in \mathcal{F}$, or there exists $x_P \in \mathcal{A}_P(p_P, w_E)$ such that $w_E \preceq_j x_P$ for some $j < t$, and at least one such $j$ equals $t-1$. Therefore, $q_P \preceq_{t} p_P$ and $q_E
\not\preceq_{t - 1} p_P$.  The statement now follows by induction.
\end{proof}
We note that the labels on the vertices in $\mathcal{S}_P$ are uniquely determined by the labelling procedure once the labels on the vertices in $\mathcal{S}_E$ are known.

\section{Position independent games}\label{pind}

In games like Cop and Robbers and many of its variants, each player has the same set of available moves irrespective of the position of the other player (some of these moves may be much better than
others, however; for example, a bad but allowed move of the robber is to move to a neighbour of a cop).

We define a generalized Cops and Robbers game $\mathcal{G}$ to be \emph{position independent} if for all $p_P \in \mathcal{P}_P$ and $q_E \in \mathcal{P}_E$, whenever $(p_P, q_E) \not\in \mathcal{F}$
the set $\mathcal{A}_P(p_P, q_E)$ depends only on $p_P$ and the set $\mathcal{A}_E(p_P, q_E)$ depends only on $q_E$.  That is, if the game is not over, from any position of the game, the set of
available moves for a player does not depend on the position of the other player. A game is called \emph{position dependent} if it is not position independent.   For example, Cops and Robbers is
position independent, while Seepage is position dependent.

In a position independent game, the set of positions of the game can be taken to be $\mathcal{P}_P \times \mathcal{P}_E$.  Furthermore, because of position independence one can define sets
$\mathcal{M}_P$ and $\mathcal{M}_E$ of \emph{allowed moves} for the Pursuer and the Evader, respectively;  for example $(p_P, p^ \prime_P) \in \mathcal{M}_P$ if and only if $p^\prime_P \in A_P(p_P,
q_E)$ (for all $q_E \in \mathcal{P}_E$).  The ordered pairs $G_P = (P_P, M_P)$ and $G_E = (P_E, M_E)$ are the \emph{position digraphs} for the Pursuer and the Evader, respectively. For all $p_P \in
\mathcal{P}_P$ and $q_E \in \mathcal{P}_E$, we have that $\mathcal{A}_P(p_P, q_E) = N^+_{G_P}(p_P)$ and $\mathcal{A}_E(p_P, q_E) = N^+_{G_E}(r_E)$. Position independent generalized Cops and Robbers
games are thus, games played on digraphs (in a certain sense).

For digraphs $G$ and $H$, the \emph{categorical product}, written $G \times H$, has vertices $V(G)\times V(H)$ and a directed edge $(a,b)(c,d)$ whenever $(a,c)\in E(G)$ and $(b,d) \in E(H)$. The
categorical product $\mathcal{R}_{\mathcal{G}} = G_P \times G_E$ is the \emph{round summary digraph} for the game $\mathcal{G}$.  The vertices of this digraph represent the state of the game at the
end of each round consisting of a move by each player, and assuming the Evader is next to move.  The edges represent possible transitions between states in consecutive rounds.  Not all of the edges
necessarily make sense.  For example, it may be that one cannot make the transition from state $s_1$ to state $s_2$ without a state in $\mathcal{F}$ arising in the middle of the round.  Directed
edges between final positions also make no sense. Technically, then, the position digraph should be a subgraph of $\mathcal{R}_{\mathcal{G}}$.  We will not make this distinction because the extra
edges will not affect our treatment.

The following consequence of Theorem \ref{main1} describes situations where the relation being trivial characterizes the games won by the Pursuer.

\smallskip
\begin{corollary}~\label{cor1}
Let $\mathcal{G}$ be a position independent generalized Cops and Robbers game. If $G_P$ is strongly connected and there exists $X \subseteq P_P$ such that $\mathcal{I} = X \times P_E$, then the
Pursuer has a winning strategy in $\mathcal{G}$ if and only if $\preceq\ =\  V(\mathcal{R}_{\mathcal{G}}) =P_P\times P_E$.
\end{corollary}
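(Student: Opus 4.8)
The plan is to prove both implications via Theorem~\ref{main1}, with the reverse direction essentially immediate and the forward direction carrying the real content through the strong connectivity of $G_P$. I read ``$\preceq\,=\,P_P\times P_E$'' as the assertion that the relation is \emph{complete}: $q_E\preceq p_P$ for every $p_P\in P_P$ and $q_E\in P_E$. Throughout I use the equivalence implicit in the proof of Theorem~\ref{main1}, namely that $q_E\preceq p_P$ holds if and only if the Pursuer has a winning strategy from the Evader-to-move state $((p_P,q_E),E)$; the contrapositive argument there shows precisely that $q_E\not\preceq p_P$ lets the Evader move perpetually so as to preserve the failure of the relation, and in particular never enter $\mathcal{F}$.

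For the reverse implication, suppose $\preceq$ is complete. Since the game is played, $\mathcal{I}=X\times P_E$ is nonempty, so $\mathcal{I}_P=X\neq\emptyset$ and $\mathcal{I}_E(p_P)=P_E$ for each $p_P\in X$. Fix any $p_P\in X$ and any $q_E\in P_E$. By rule~(4) the set $\mathcal{A}_P(p_P,q_E)=N^+_{G_P}(p_P)$ is nonempty, and any $w_P$ in it satisfies $q_E\preceq w_P$ by completeness. Hence the condition of Theorem~\ref{main1} holds and the Pursuer wins; note this direction uses neither strong connectivity nor the product form of $\mathcal{I}$.

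For the forward implication, suppose the Pursuer wins. By Theorem~\ref{main1} there is a fixed start $p_P^\ast\in\mathcal{I}_P=X$ such that, for every $q_E\in\mathcal{I}_E(p_P^\ast)=P_E$, the Pursuer (who is to move) wins from $((p_P^\ast,q_E),P)$; call $p_P^\ast$ a \emph{universal winning start}. I want to establish $q_E\preceq p_P$ for an arbitrary pair $(p_P,q_E)$, i.e.\ that the Pursuer wins from the Evader-to-move state $((p_P,q_E),E)$. The strategy is: using strong connectivity of $G_P$, fix a directed path from $p_P$ to $p_P^\ast$ and have the Pursuer simply walk along it. Position independence is exactly what makes this legal: as long as no final position has been reached (in which case the Pursuer has already won), $\mathcal{A}_P$ at each visited vertex equals its out-neighbourhood in $G_P$, so the next edge of the path is always available whatever the Evader does. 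Tracking the turn order, after the Pursuer completes the path it occupies $p_P^\ast$ with the Evader to move; one further Evader move, to some $q_E'\in P_E$, produces the Pursuer-to-move state $((p_P^\ast,q_E'),P)$, from which the Pursuer wins by the universal-winning property. Thus the Pursuer wins from $((p_P,q_E),E)$, giving $q_E\preceq p_P$; as the pair was arbitrary, $\preceq$ is complete.

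The main obstacle is the bookkeeping in this last step: one must verify that the ``walk to $p_P^\ast$'' strategy is always executable, and that the parities line up so that reaching $p_P^\ast$ genuinely delivers a Pursuer-to-move state to which the universal-winning property applies. The conceptual crux is recognizing that position independence, which makes the Pursuer's moves governed solely by $G_P$, together with strong connectivity of $G_P$, lets the Pursuer transport any configuration to one whose Pursuer-component is $p_P^\ast$, and that $\mathcal{I}=X\times P_E$ guarantees $p_P^\ast$ wins against every Evader position in $P_E$ --- exactly the set of Evader positions that can occur upon arrival.
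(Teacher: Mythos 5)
Your proof is correct, but your forward direction is executed differently from the paper's, so a comparison is worth recording. Both arguments share the same skeleton: Theorem~\ref{main1} yields a start vertex $p_P^\ast$ whose winning condition holds against \emph{all} of $P_E$ (this is exactly where the hypothesis $\mathcal{I}=X\times P_E$ enters), and strong connectivity of $G_P$ transports this property to every Pursuer position. The paper, however, stays entirely inside the relational calculus: it first lifts the Theorem~\ref{main1} condition at $p_P^\ast$ to the statement $q_E\preceq_i p_P^\ast$ for all $q_E$ (possible precisely because the condition applies to every $x_E\in N^+_{G_E}(q_E)\subseteq P_E$), and then inducts along directed paths ending at $p_P^\ast$: if every $q_E$ satisfies $q_E\preceq_{i+k-1}y$ for some $y\in N^+_{G_P}(x)$, then every $q_E$ satisfies $q_E\preceq_{i+k}x$. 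You instead argue at the level of strategies---the Pursuer walks a directed path from $p_P$ to $p_P^\ast$, which position independence makes legal, then wins after the Evader's next move---and you convert back to the relation via the equivalence ``$q_E\preceq p_P$ if and only if the Pursuer wins from the state $((p_P,q_E),E)$.'' That equivalence is genuinely needed by your route, and it is only implicit in the paper: one direction is the inductive claim inside the reverse implication of Theorem~\ref{main1}, while the other is the contrapositive argument there, where one must check (as you correctly flag) that the Evader's escape moves also avoid $\mathcal{F}$, which follows from the stabilization $\preceq_t\,=\,\preceq_{t+1}$. The trade-off is that the paper's relational induction is self-contained given the definitions and tracks the indices $i+k$ explicitly, hence also bounds how quickly the Pursuer wins in terms of path length, whereas your version is more transparent game-theoretically but rests on a lemma the paper never isolates, whose verification is exactly the parity and $\mathcal{F}$-avoidance bookkeeping you describe. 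Your reverse direction (direct appeal to Theorem~\ref{main1} plus nonemptiness of move sets) is also fine and is, if anything, cleaner than the paper's one-line remark that it is ``analogous'' to an implication of Theorem~\ref{main1}.
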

\begin{proof}
 The proof of the reverse direction is analogous to the proof of the forward direction in Theorem~\ref{main1}, and so is omitted. For the forward direction, suppose the Pursuer has a winning strategy.
Then there exists an allowed starting position, say $p_P$, for the Pursuer such that for all $q_E \in P_E$, either $(p_P, q_E) \in \mathcal{F}$ or there exists $w_P \in N^+_{G_P}(p_P)$ and a natural
number $j$ such that $q_E \preceq_j w_P$. Therefore, there is a natural number $i$ such that $q_E \preceq_i p_P$ for all $q_E \in P_E$.  It now follows by induction that all $q_E \preceq_{i+k} x$ for
all $q_E \in P_E$ for all $x$ joined to $p_P$ by a directed path of length $k$.  The result now follows as $G_P$ is strongly connected.
\end{proof}

The position independent case gives rise to an algorithm which is a polynomial time function of the number of positions, assuming the number of allowed moves of the Pursuer and the Evader is not too
large.

\smallskip
\begin{theorem}~\label{complexity}
Let $\mathcal{G}$ be a position independent generalized Cops and Robbers game. Given the graphs $G_P$ and $G_E$, if $N^+_{G_P}(p_P)$ and $N^+_{G_E}(p_E)$ can be obtained in time
$O(f_P(\mathcal{P}_P))$ and $O(f_E(\mathcal{P}_E))$, respectively, then there is a $$O(|\mathcal{P}_P| \cdot |\mathcal{P}_E|  \cdot f_P(|\mathcal{P}_P|)  \cdot f_E(|\mathcal{P}_E|))$$ algorithm to
determine if the Pursuer has a winning strategy, and the length of the game assuming optimal play. \label{PolyTime}
\end{theorem}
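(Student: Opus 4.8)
The plan is to realise the labelling procedure of Section~\ref{DefnSec} as an efficient backward search on the state digraph $\mathcal{D}_\mathcal{G}$, and then to read off the two required outputs using Theorems~\ref{main1} and~\ref{RelationIndex} together with Corollary~\ref{corlength}. First I would construct $\mathcal{D}_\mathcal{G}$ explicitly. Because $\mathcal{G}$ is position independent we may take $\mathcal{P} = \mathcal{P}_P \times \mathcal{P}_E$, so $\mathcal{D}_\mathcal{G}$ has exactly $2|\mathcal{P}_P|\,|\mathcal{P}_E|$ vertices. For each Pursuer state $((p_P,q_E),P)$ the out-neighbours are the states $((p'_P,q_E),E)$ with $p'_P \in N^+_{G_P}(p_P)$, obtained in time $O(f_P(|\mathcal{P}_P|))$; for each Evader state the out-neighbours come from $N^+_{G_E}(q_E)$ in time $O(f_E(|\mathcal{P}_E|))$. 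Enumerating all out-neighbour lists, and simultaneously recording the reverse (in-neighbour) lists, therefore takes time $O(|\mathcal{P}_P|\,|\mathcal{P}_E|(f_P(|\mathcal{P}_P|)+f_E(|\mathcal{P}_E|)))$, which lies within the claimed bound since we may assume $f_P,f_E \ge 1$. As the out-degree of a vertex cannot exceed the time needed to list its neighbourhood, the number of edges is $O(|\mathcal{P}_P|\,|\mathcal{P}_E|(f_P+f_E))$, again dominated by $|\mathcal{P}_P|\,|\mathcal{P}_E|\,f_P\,f_E$.

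Second, I would compute all final labels by a single retrograde (backward breadth-first) sweep rather than by iterating the fixed point defining $\preceq$. Initialise every state whose position lies in $\mathcal{F}$ with label $0$ and place it in a queue, and give every Evader state a counter equal to its out-degree. Process finalised states in nondecreasing order of label; when a state $v$ with final label $\ell$ is removed, examine each in-neighbour $u$: if $u \in \mathcal{S}_P$ is not yet finalised, set $\mathrm{label}(u)=\ell+1$ and finalise it (correct, since $v$ is then the first out-neighbour of $u$ to be finalised, hence one of minimum label); if $u \in \mathcal{S}_E$, decrement its counter, and finalise it with $\mathrm{label}(u)=\ell+1$ exactly when the counter reaches $0$, at which point all its out-neighbours are finite and $v$ realises the maximum. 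States never finalised keep label $\infty$. Because the fourth rule of the definition guarantees non-empty move sets, the only sinks are final positions, so this sweep faithfully implements the min/max recursion of the labelling procedure; each vertex is finalised once and each edge scanned once, giving time $O(|V|+|E|)$, again within the bound.

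Third, I would extract the outputs from these labels. By Theorem~\ref{RelationIndex} the labels encode the relation $\preceq$, with $\min\{j: q_E \preceq_j p_P\} = \lceil \mathrm{label}((p_P,q_E),E)/2\rceil$; in particular $q_E \preceq w_P$ precisely when $((w_P,q_E),E)$ received a finite label. Theorem~\ref{main1} then says the Pursuer wins if and only if some $p_P \in \mathcal{I}_P$ has the property that every $q_E \in \mathcal{I}_E(p_P)$ either satisfies $(p_P,q_E)\in\mathcal{F}$ or admits $w_P \in N^+_{G_P}(p_P)$ for which $((w_P,q_E),E)$ is finite. Verifying this scans each allowed start position once and performs one neighbourhood lookup, costing $O(|\mathcal{P}_P|\,|\mathcal{P}_E|\,f_P(|\mathcal{P}_P|))$. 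When the Pursuer wins, the length is obtained from the same labels via the definition of $\ell(p_P,q_E)$ and the min--max of Corollary~\ref{corlength}, well within the same budget.

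The main obstacle I expect is the second step: arranging the computation so that the labels are produced in a single linear-time pass. A literal iteration of the definition of $\preceq$ could re-evaluate the minimum and maximum over every neighbourhood at each of many rounds, inflating the running time by a factor proportional to the number of rounds, and hence by the length of the game. The counter device on the Evader (``max'', conjunctive) states together with immediate propagation at the Pursuer (``min'', disjunctive) states is exactly what removes this factor; its correctness rests on processing finalised states in nondecreasing label order, which I would justify using that consecutive labels along the backward wave differ by one, so a layered or bucketed queue maintains the required order at no extra asymptotic cost.
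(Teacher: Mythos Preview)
Your argument is correct, but it follows a genuinely different route from the paper. The paper proves the bound by iterating the definition of the relations $\preceq_i$ directly: it maintains a $|\mathcal{P}_P|\times|\mathcal{P}_E|$ matrix $\mathcal{M}_\mathcal{G}$ whose $(p_P,q_E)$ entry records the least $i$ with $q_E\preceq_i p_P$, and for each successive $i$ sweeps the not-yet-filled entries row by row, testing for each whether every $x_E\in N^+_{G_E}(q_E)$ admits some $y_P\in N^+_{G_P}(p_P)$ with a previously filled $(y_P,x_E)$ entry; it then invokes Corollary~\ref{cor1} and Corollary~\ref{corlength} to read off the winner and the length. Your approach instead builds $\mathcal{D}_\mathcal{G}$ once and performs a single retrograde sweep with counters on the Evader (``max'') states, appealing to Theorem~\ref{main1} and Theorem~\ref{RelationIndex} for the interpretation.

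What this buys: the paper's matrix iteration is closer to the relational definition and requires no auxiliary data structures beyond the matrix, but each pass may retest many entries, and the paper's accounting of the number of passes against the per-pass cost is somewhat loose. Your backward BFS touches every edge of $\mathcal{D}_\mathcal{G}$ exactly once, so it actually achieves $O\!\big(|\mathcal{P}_P|\,|\mathcal{P}_E|\,(f_P(|\mathcal{P}_P|)+f_E(|\mathcal{P}_E|))\big)$, strictly inside the stated product bound; the counter device is precisely what eliminates the extra ``number of rounds'' factor you worried about. A minor further difference is that you read off the winning condition via Theorem~\ref{main1}, which applies without the extra hypotheses (strong connectivity of $G_P$ and $\mathcal{I}=X\times\mathcal{P}_E$) that Corollary~\ref{cor1}, used in the paper's proof, carries.
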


\begin{proof}
We describe an algorithm to determine if the Pursuer has a winning strategy in a position independent game.  Define the matrix $\mathcal{M}_{\mathcal{G}}$, with rows indexed by elements of
$\mathcal{P}_P$ and columns indexed by elements of $\mathcal{P}_E$, to record the sequence of relations.  If $(p_P, q_E) \in \mathcal{F}$, then the  $(p_P, q_E)$ entry of $\mathcal{M}_{\mathcal{G}}$
is zero.  For $i > j$, the $(p_P, q_E)$ entry of $\mathcal{M}_{\mathcal{G}}$ equals $i$ if for every $x_E \in N^+_{G_E}(q_E)$, there exists $y_P \in N^+_{G_P}(p_P)$ such that the $(x_E, y_P)$ entry
of $\mathcal{M}_{\mathcal{G}}$ equals $j$. The matrix $\mathcal{M}_{\mathcal{G}}$ has $|P_P \times P_E|$ entries.  For all positive integers $i$, and until no entries in the matrix change, each entry
in $\mathcal{M}_{\mathcal{G}}$ not yet assigned a value must be tested to see if it can be set to $i$.  It may happen that for each $x_E \in N^+_{G_E}(q_E)$, there exists $y_P \in N^+_{G_P}(p_P)$
such that the $(x_E, y_P)$ entry of $\mathcal{M}_{\mathcal{G}}$ equals $j < i$.  For each $i$ we can update $\mathcal{M}_{\mathcal{G}}$ row-by-row.

By Corollary \ref{cor1}, the Pursuer has a winning strategy if and only if every entry of $\mathcal{M}_{\mathcal{G}}$ is eventually assigned a value.  By definition of the sequence of relations, one
can conclude that the Pursuer has a winning strategy as soon as some row of $\mathcal{M}_{\mathcal{G}}$ has a value for each entry.  More information can be obtained by filling in the entire matrix.
By Corollary \ref{corlength}, the length of the game, assuming optimal play, is $\max_{p_P \in \mathcal{P}_P} \,\min_{q_E \in \mathcal{P}_E}\quad \mathcal{M}_{\mathcal{G}}(p_P, q_E).$

We now consider the complexity of this algorithm. It takes $|\mathcal{F}| \leq |\mathcal{P}_P \times \mathcal{P}_E|$ steps to set the entries corresponding to the final positions to zero.   For each
$i$, every entry not yet assigned a value must be tested to see if it can be set to $i$.  The procedure attempts to fill in $\mathcal{M}_{\mathcal{G}}$ row-by-row.  For each row $p_P$, suppose it
takes $f_P(|\mathcal{P_P}|)$ time to obtain $N^+_{G_P}(p_P)$.  Possibly all $|\mathcal{P}_E|$ entries in the row must be tested.  To test whether the entry in column $q_E$ can be assigned a value
requires obtaining the list of all possible the Evader moves from $q_E$.  Suppose this takes time $f_E(|\mathcal{P}_E|)$.  Then processing row $p_P$ takes  time at most $O(f_P(|\mathcal{P}_P|) \cdot
|\mathcal{P}_E|  \cdot f_E(|\mathcal{P}_E|)). $  There are $|\mathcal{P}_P|$ rows and possibly as many as $|\mathcal{P}_P \times \mathcal{P}_E|$ values for $i$.  Hence, given the graphs $G_P$ and
$G_E$, the algorithm takes time at most $O(|\mathcal{P}_P| \cdot  |\mathcal{P}_E|  \cdot f_P(|\mathcal{P}_P|) \cdot f_E(|\mathcal{P}_E|)),$ as desired.
\end{proof}

As an application of Theorem~\ref{complexity}, consider the game of Cops and Robber on a reflexive graph $G$ with a fixed positive number $k$ of cops.  The graph $G_E$ is the reflexive graph $G$ and
the graph $G_P$ is the $k$-fold categorical product of $G$ with itself.  We can take $f_P$ and $f_E$ to be the maximum degree of $G_P$ and $G_E$, respectively.  Hence, $f_P$ is $O(n^k)$ and $f_E$ is
$O(n)$. Finally, $|\mathcal{F}| = 2n^2$ and $|\mathcal{P}_E \times \mathcal{P}_P| = n^{k+1}$. Thus, given $G_P$ and $G_E$, we can decide if the Pursuer has a winning strategy, and the length of the
game assuming optimal play, in time
$$O(n^{k+1}(n \cdot n^k))=O(n^{2k+2}).$$
Note, however, a succinct description of the game would consist only of the graph $G$ on $n$ vertices. The graph $G_P$ would need to be constructed, which can be accomplished in time $O(n^{2k})$. The
overall complexity does not change because
 $$O(n^{2k} +n^{k+1}(n \cdot n^k))=O(n^{2k+2}).$$
The bound  $O(n^{2k+2})$
matches the bound on the complexity of the algorithms presented in \cite{bcp,HM}. Note that if $k$ is not fixed, then determining if $k$ cops
have a winning strategy in Cops and Robbers is \textbf{EXPTIME}-complete \cite{k}.

\medskip

We finish by providing a vertex elimination ordering for generalized Cops and Robbers games, analogous to cop-win orderings characterizing cop-win graphs and the generalization for $k$-cop-win graphs
and variations of the Cops and Robbers game  \cite{cm,NW}. In a position independent generalized Cops and Robbers game $\mathcal{G}$, a vertex $(p_P,q_E)$ of $\mathcal{R}_{\mathcal{G}}$ is
\emph{removable with respect to} $X \subseteq V(\mathcal{R}_{\mathcal{G}})$ if either
\begin{enumerate}
\item $(p_P,q_E) \in \mathcal{F}$, or
\item for every $x_E \in N^+_{G_E}(q_E)$ either $(p_P, x_E) \in \mathcal{F}$ or there exists $y_P \in N^+_{G_P}(p_P)$ such that $(y_P, x_E)$ is in $X$.
\end{enumerate}
A \emph{removable vertex ordering} for $\mathcal{R}_G$ is a sequence of vertices in which each element $(p_P, q_E)$ is removable with respect to the set $X$ of vertices that precede it in the
sequence, and there exists $p_P \in \mathcal{I}_P$ so that, for all $q_E \in \mathcal{I}_E(p_P)$, either $(p_P, q_E) \in \mathcal{F}$ or there exists $w_P \in \mathcal{A}(p_P, q_E)$ such that $(w_P,
q_E)$ belongs to the sequence.

If the Pursuer has a winning strategy, then listing the vertices of $\mathcal{S}_G$ so that the pairs in $\preceq_0$ are followed by those in $\preceq_1 \setminus \preceq_0$, then those in $\preceq_2
\setminus \preceq_1$, and so on, is a removable vertex ordering for $\mathcal{S}_G$.  Other orders may be possible; for example, it may be that not all  pairs in $\preceq_0$ need to be listed before
a pair in  $\preceq_1$ can be listed.

We now have the following vertex-ordering characterization of games where the Pursuer has a winning strategy.

\smallskip
\begin{corollary}~\label{ordering}
The Pursuer has a winning strategy in the position independent generalized Cops and Robbers game $\mathcal{G}$ if and only if $\mathcal{R}_{\mathcal{G}}$ admits a removable vertex ordering.
\end{corollary}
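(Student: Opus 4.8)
The plan is to reduce both directions to Theorem~\ref{main1} by first pinning down exactly which vertices of $\mathcal{R}_{\mathcal{G}}$ can occur in a removable vertex ordering. Write $S = \{(p_P, q_E) \in V(\mathcal{R}_{\mathcal{G}}) : q_E \preceq p_P\}$. The central claim is that $(p_P,q_E)$ appears in some sequence satisfying the removability condition (ignoring the start-position requirement) if and only if $(p_P,q_E) \in S$. The key to this claim is that position independence lets us replace $\mathcal{A}_E(p_P, q_E)$ by $N^+_{G_E}(q_E)$ and $\mathcal{A}_P(p_P, x_E)$ by $N^+_{G_P}(p_P)$, so that the second removability clause for $(p_P,q_E)$ with respect to $X$ becomes, verbatim, the recursive clause defining $q_E \preceq_i p_P$ once we take $X$ to be the set of pairs lying in $\preceq_{i-1}$ (using that ``$x_E \preceq_j w_P$ for some $j<i$'' is the same as $(w_P, x_E)$ lying in $\preceq_{i-1}$, because the relations are non-decreasing). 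Once the claim is in hand, both implications of the corollary fall out by matching the start-position clause of a removable ordering against the hypothesis of Theorem~\ref{main1}.

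For the forward direction I would take the ordering already described before the corollary: list the pairs of $\preceq_0$, then those of $\preceq_1 \setminus \preceq_0$, and so on. For a pair $(p_P,q_E)$ entered at stage $i$ (so $q_E \preceq_i p_P$ but $q_E \not\preceq_{i-1} p_P$) that is not in $\mathcal{F}$, every witness $(w_P, x_E)$ supplied by the definition of $\preceq_i$ has index at most $i-1$ and hence already precedes $(p_P,q_E)$; since removability only asks for the witnesses to lie in the predecessor set $X$, and removability is monotone in $X$, the pair is removable. Assuming the Pursuer wins, Theorem~\ref{main1} furnishes $p_P \in \mathcal{I}_P$ such that for every $q_E \in \mathcal{I}_E(p_P)$ either $(p_P,q_E)\in\mathcal{F}$ or some $w_P \in \mathcal{A}_P(p_P,q_E)$ has $q_E \preceq w_P$, i.e.\ $(w_P,q_E)\in S$, which is listed; this is exactly the start-position clause, so the listing is a removable vertex ordering.

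For the reverse direction I would prove by induction along a given removable ordering $v_1, v_2, \ldots$ that every listed vertex lies in $S$. The base case $v_1$ is removable with respect to $\emptyset$, so either $v_1 \in \mathcal{F}$ (whence $v_1 \in \preceq_0$) or all its $G_E$-out-neighbours force a final position (whence $v_1 \in \preceq_1$); either way $v_1 \in S$. For the step, if $(p_P,q_E)=v_k$ is removable with respect to its predecessors $X \subseteq S$ and is not final, then for each $x_E \in N^+_{G_E}(q_E)$ either $(p_P,x_E)\in\mathcal{F}$ or some $y_P \in N^+_{G_P}(p_P)$ has $(y_P,x_E)\in X \subseteq S$, i.e.\ $x_E \preceq y_P$; taking one more than the maximum of the finitely many indices witnessing these relations gives an index $J$ with $q_E \preceq_J p_P$, so $v_k \in S$. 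Finally, the start-position clause of the ordering hands us $p_P \in \mathcal{I}_P$ with the property that each $q_E \in \mathcal{I}_E(p_P)$ is either captured or has $w_P \in \mathcal{A}_P(p_P,q_E)$ with $(w_P,q_E)$ listed, hence $(w_P,q_E)\in S$ and $q_E \preceq w_P$; this is precisely the hypothesis of Theorem~\ref{main1}, so the Pursuer wins.

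The step I expect to be the crux is the inductive correspondence underpinning the claim, and in particular the reverse implication's induction: one must verify that removability of a vertex with respect to a predecessor set already contained in $\preceq$ forces membership in $\preceq$, which requires passing from the unindexed witnesses $(y_P,x_E)\in S$ back to a single finite index $J$ by maximizing over the (finite) out-neighbourhood $N^+_{G_E}(q_E)$. The only other point needing care is the explicit invocation of position independence to identify the removability clauses with the defining clauses of the $\preceq_i$; everything else is bookkeeping built on Theorem~\ref{main1}.
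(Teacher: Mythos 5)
Your proof is correct. The forward direction coincides with the paper's: both take the listing of the pairs in $\preceq_0$, then $\preceq_1 \setminus \preceq_0$, then $\preceq_2 \setminus \preceq_1$, and so on, and use Theorem~\ref{main1} to supply the start-position clause (you verify the removability of each listed pair in more detail than the paper, which simply asserts it). The reverse direction, however, takes a genuinely different route. The paper argues directly with the game: the Pursuer starts at the $p_P$ given by the start-position clause and thereafter always moves so that the current pair $(w_P,q_E)$ belongs to the sequence; removability guarantees that after each round the current pair is strictly closer to the beginning of the finite sequence, so this descent must terminate in a position of $\mathcal{F}$. You instead prove, by induction along the ordering, the structural fact that every listed vertex $(p_P,q_E)$ satisfies $q_E \preceq p_P$ --- recovering a finite index $J$ by maximizing the witnessing indices over the finite out-neighbourhood $N^+_{G_E}(q_E)$ --- and then hand the start-position clause to Theorem~\ref{main1}. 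Your route buys a cleaner division of labour: the game-theoretic work is done once, inside Theorem~\ref{main1}, and the corollary becomes a purely combinatorial translation between removable orderings and the relation $\preceq$; it also makes explicit the correspondence (listed vertices are exactly pairs of $\preceq$, up to which pairs one chooses to list) that the paper leaves implicit. The paper's route is more self-contained and exhibits the winning strategy concretely, with position in the sequence acting as a strictly decreasing potential, at the cost of rerunning an argument already embedded in Theorem~\ref{main1}. Both treatments rely, without comment in the paper and only implicitly in your write-up, on the fact that in the position independent setting every pair of $\mathcal{P}_P \times \mathcal{P}_E$ is an achievable position, so the clause $((p_P,q_E),E) \in \mathcal{S}$ in the definition of $\preceq_i$ is automatic; this is harmless, but worth flagging.
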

\begin{proof}
 Suppose that the Pursuer has a winning strategy.  Then by Theorem~\ref{main1}, there exists $p_P \in \mathcal{I}_P$ such that for all $q_E \in \mathcal{I}_E(p_P)$ either $(p_P, q_E) \in \mathcal{F}$,
or there exists $w_P \in \mathcal{A}(p_P, q_E)$ with $q_E \preceq w_P$.  Listing the vertices of $\mathcal{S}_G$ so that the pairs in $\preceq_0$ are followed by those in $\preceq_1 \setminus
\preceq_0$, then those in $\preceq_2 \setminus \preceq_1$, and so on, is a removable vertex ordering for $\mathcal{R}_G$.

On the other hand, suppose $\mathcal{R}_{\mathcal{G}}$ admits a removable vertex ordering.  Then there is a sequence vertices such that each element $(p_P, q_E)$ is removable with respect to the set
$X$ of vertices that precede it in the sequence, and there exists $p_P \in \mathcal{I}_P$ so that  for all $q_E \in \mathcal{I}_E(p_P)$ either $(p_P, q_E) \in \mathcal{F}$ or there exists $w_P \in
N^+_{G_P}(p_P)$ such that $(w_P, q_E)$ belongs to the sequence.  the Pursuer chooses $p_P$ for their starting position.  No matter which position $q_E$ the Evader chooses, if the game is not over,
then the Pursuer has a move to a position $w_P$ so that $(w_P, q_E)$ belongs to the sequence.  By definition of a removable vertex ordering, the position each subsequent move by the Pursuer is closer
to the beginning of the sequence.  Hence, eventually a position in $\mathcal{F}$ is encountered.
\end{proof}


\begin{thebibliography}{99}
\bibliographystyle{amsplain}

\bibitem{AF} M.~Aigner, M.~ Fromme, A game of cops and robbers, {\em Discrete Applied Mathematics} {\bf 8} (1984) 1--12.

\bibitem{acp} S.\ Arnborg, D.G.\ Corneil, A.\ Proskurowski, Complexity of finding embeddings in a $k$-tree, \emph{SIAM J. Algebraic Discrete Methods} \textbf{8} (1987) 277-–287.

\bibitem{bonato1} W.~Baird, A.~Bonato, Meyniel's conjecture on the cop number: a survey, \emph{Journal of Combinatorics} \textbf{3} (2012) 225--238.

\bibitem{BI} A.~Berarducci, B.~Intriglia, On the cop number of a graph, {\em Advances in Applied Mathematics} {\bf 14} (1993) 389--403.

\bibitem{bonato2} A.\ Bonato, WHAT IS ... Cop Number? \emph{Notices of the American Mathematical Society} \textbf{59} (2012) 1100--1101.

\bibitem{bonato3} A.\ Bonato,  Catch me if you can: Cops and Robbers on graphs, In: \emph{Proceedings of the 6th International Conference on Mathematical and Computational Models (ICMCM'11)}, 2011.

\bibitem{bcp} A.\ Bonato, E.\ Chiniforooshan, P.\ Pra{\l }at, Cops and Robbers from a distance, \emph{Theoretical Computer Science} \textbf{411} (2010) 3834--3844.

\bibitem{bmp} A.\ Bonato, D.\ Mitsche, P.\ Pra{\l }at, Vertex-pursuit in random directed acyclic graphs, \emph{SIAM Journal on Discrete Mathematics} \textbf{27} (2013) 732--756.

\bibitem{bonatorjn} A.\ Bonato, R.J.\ Nowakowski, \emph{The Game of Cops and Robbers on Graphs}, American Mathematical Society, Providence, Rhode Island, 2011.

\bibitem{butter} J.V.\ Butterfield, D.W.\ Cranston, G.J.\ Puleo, D.B.\ West, R.\ Zamani, Revolutionaries and spies: spy-good and spy-bad graphs, \emph{Theoretical Computer Science} \textbf{463} (2012) 35-–53.

\bibitem{ccnv} J.\ Chalopin, V.\ Chepoi, N.\ Nisse, Y.\ Vax\`{e}s, Cop and robber games when the robber can hide and ride, \emph{SIAM Journal on Discrete Mathematics} \textbf{25} (2011) 333–-359.

\bibitem{CFFMN} N.E.\ Clarke, S.\ Finbow, S.L.\ Fitzpatrick, M.E.\ Messinger, R.J.\ Nowakowski, Seepage in directed acyclic graphs, \emph{Australasian Journal of Combinatorics} (2009) \textbf{43}
 91--102.

\bibitem{cm} N.E.\ Clarke, G.\ MacGillivray, Characterizations of $k$-copwin graphs, \emph{Discrete Mathematics} \textbf{312} (2012) 1421–-1425.

\bibitem{CN1} N.E.\ Clarke, R.J.\ Nowakowski, A tandem version of the Cops and Robber game played on products of graphs, \emph{Discussiones Mathematicae Graph Theory} \textbf{25} (2005) 241--249.

\bibitem{cn} N.E.\ Clarke, R.J.\ Nowakowski, Cops, robber, and traps, \emph{Utilitas Mathematica} \textbf{60} (2001) 91--98.

\bibitem{C3} N.E.\ Clarke, R.J.\ Nowakowski, Tandem-win Graphs, {\em Discrete Mathematics} {\bf 299} (2005) 56--64.

\bibitem{FM} S.\ Finbow, G.\ MacGillivray,
The firefighter problem: a survey of results, directions and
questions. Australasian Journal of Combinatorics \textbf{43}, (2009) 57-77.

\bibitem{ffn} F.V.\ Fomin, P.\ Fraigniaud, N.\ Nisse, Nondeterministic graph searching: from pathwidth to treewidth, \emph{Algorithmica} \textbf{53} (2009) 358–-373.

\bibitem{ghh} W.\ Goddard, S.M.\ Hedetniemi, S.T.\ Hedetniemi, Eternal security in graphs, \emph{Journal of Combinatorial Mathematics and Combinatorial Computing} \textbf{52} (2005) 169--180.

\bibitem{HM} G.\ Hahn, G.\ MacGillivray, A note on $k$-cop, $\ell$-robber games on graphs, \emph{Discrete Mathematics} \textbf{306} (2006)
2492--2497.

\bibitem{k} W.B.\ Kinnersley, Cops and Robbers is EXPTIME-complete, \emph{Journal of Combinatorial Theory, Series B} \textbf{111} (2015) 201--220.

\bibitem{mp} D.\ Mitsche, P.\ Pra\l{}at, Revolutionaries and spies on random graphs, \emph{Combinatorics, Probability and Computing} \textbf{22} (2013) 417--432.

\bibitem{NW} R.J.\ Nowakowski, P.\ Winkler, Vertex to Vertex Pursuit in a Graph, {\em Discrete Mathematics} \textbf{43} (1983) 23--29.

\bibitem{Q1} A.\ Quilliot, Jeux et pointes fixes sur les graphes, \emph{Th\`{e}se de 3\`{e}me cycle}, Universit\'{e} de Paris VI, 1978, 131--145.

\bibitem{Q2} A.\ Quilliot, \emph{Probl\`{e}mes de jeux, de point Fixe, de connectivit\'{e} et de repres\'{e}sentation sur des graphes, des ensembles ordonn\'{e}s et des hypergraphes}, Th\`{e}se
    d'Etat, Universit\'{e} de Paris VI, 1983, 131--145.

\bibitem{Smith} C.A.B.\ Smith, Graphs and composite games, \emph{Journal of Combinatorial Theory} \textbf{1} (1966) 51--81.

\bibitem{Steinhaus} H.\ Steinhaus, Definitions for a theory of games and pursuit, \emph{My\'{s}l Akad.Lw\'{o}w} \textbf{1} (1925) 13--14;  reprinted in \emph{Naval Research Logistics Quarterly}
    \textbf{7} (1960) 105--108.

\end{thebibliography}
\end{document}